\documentclass[11pt,oneside]{amsart}
\topmargin 0cm \evensidemargin 0cm \oddsidemargin 0cm \textwidth 16cm
\textheight22cm
\usepackage{amscd}
\usepackage{amssymb}
\usepackage{amsfonts}
\usepackage[centertags]{amsmath}
\usepackage{latexsym}
\usepackage{verbatim}
\usepackage{amsthm}
\usepackage{color}
\numberwithin{equation}{section}

\theoremstyle{plain}
\newtheorem{theorem}{Theorem}[section]

\newtheorem{lemma}[theorem]{Lemma}

\theoremstyle{definition}

\newcommand\R{{\mathbb R}}

\newcommand{\g}{\mathfrak{g}}

\sloppy
\begin{document}

\title[Tamed symplectic structures on compact solvmanifolds]{Tamed symplectic structures on  compact solvmanifolds of completely solvable type}
\author{Anna Fino and  Hisashi Kasuya}
\address{(Anna Fino) Dipartimento di Matematica G. Peano \\ Universit\`a di Torino\\
Via Carlo Alberto 10\\
10123 Torino\\ Italy}
\email{annamaria.fino@unito.it}
 \address{(Hisashi Kasuya) Department of Mathematics, Tokyo Institute of 
Technology, 1-12-1, O-okayama, Meguro, Tokyo 152-8551, Japan}
\email{kasuya@math.titech.ac.jp}

\subjclass[2010]{Primary 32J27; Secondary 53C55, 53C30, 53D05}
\thanks{This work was partially supported by the project PRIN {\em Variet\`a reali e complesse: geometria, topologia e analisi armonica},  the project FIRB {\em Differential Geometry and Geometric functions theory}, GNSAGA (Indam) of Italy and JSPS Research Fellowships for Young Scientists.\\
Keywords and phrases: {\em symplectic forms,  complex structures, completely solvable, solvmanifolds}}

\begin{abstract}
A compact solvmanifold   of completely solvable type, i.e. a compact quotient of a completely solvable Lie group by a lattice,  has a K\"ahler structure if and only if it is a complex torus.  We  show that  a    compact solvmanifold  $M$ of completely solvable type   endowed with an invariant  complex structure $J$ admits  a  symplectic form taming J if and only if   $M$ is a complex torus.  This result generalizes   the   one obtained     in \cite{EFV} for nilmanifolds.  
\end{abstract}

\maketitle

\section{Introduction}

We will say that a symplectic form $\Omega$ 
 on a complex manifold $(M, J)$  tames  the complex structure  $J$ if
$\Omega (X, JX)  > 0$ for  every  non-zero vector field  $X$  on M or, equivalently, if the $(1,1)$-part of  the $2$-form $\Omega$  is positive. By \cite{ST,LZ} a compact complex surface
admitting a  symplectic structure  taming a complex structure is necessarily K\"ahler.  Moreover, by  \cite{Peternell}  non-K\"ahler Moishezon complex structures on compact manifolds can not be tamed by a symplectic form (see also \cite{Zhang}). However, it is still an open problem to find
out an example of a compact  manifold  having a  symplectic structure  taming a complex structure but  not  admitting  any K\"ahler structure. 

Some negative results have been obtained  for compact  nilmanifolds in \cite{EFV} and for special classes of  compact solvmanifolds in \cite{FKV}, where by   a compact nilmanifold  (respectively solvmanifold) we mean  a compact quotient of a nilpotent (resp. solvable) Lie group  $G$ by a discrete subgroup $\Gamma$.  

We recall that   a compact  nilmanifold  is K\"ahler  if and only if it is diffeomorphic to  a torus (\cite{BG, Hasegawa}).   Benson and Gordon  in \cite{BG}  conjectured that    if  a  compact solvmanifold $\Gamma \backslash G$ of  completely solvable type  admits a K\"ahler
metric then $M$  is diffeomorphic to a standard $2n$-torus.   
The conjecture was proved  by  Hasegawa \cite{Ha2}  and  more  generally it was showed that a compact solvmanifold is K\"ahler if and only if it a finite quotient of a complex torus which has the structure of a complex torus bundle over a complex torus.  A similar result  was proved by Baues and Cortes in \cite{BC2}  for  K\"ahler infra-solvmanifolds,  showing the relation   of the Benson-Gordon conjecture  to the more general problem of aspherical K\"ahler manifolds with solvable fundamental group.

In \cite{EFV}   it has been shown that a  compact nilmanifold $\Gamma \backslash G$  endowed with  an invariant complex structure $J$, i.e.  a complex structure which comes from a left invariant complex structure on $G$, admits a symplectic form taming $J$ if and only if $\Gamma \backslash G$ is a torus.  The result was obtained by using a characterization of compact nilmanifolds admitting  pluriclosed metrics, i.e. Hermitian metrics such that its fundamental form $\omega$ satisfies the condition $\partial \overline \partial \omega =0$.  

For compact solvmanifolds by \cite{FKV}  if   $J$ is invariant under the action of a nilpotent complement of the nilradical of  $G$, $J$ is abelian or $G$ is almost abelian (not of type (I)), then the  compact solvmanifold $\Gamma \backslash G$ cannot admit any symplectic form taming the complex structure $J$, unless  $\Gamma \backslash G$ is K\"ahler. 
For a   compact solvmanifold  $\Gamma \backslash G$   of completely solvable type,  no general result  about  the existence of  symplectic  forms  taming   complex structures is  known.  In the present paper we will show that if such  symplectic structures  exist then the compact solvmanifold has to be a torus and therefore  a K\"ahler manifold.

We note that at  the level of Lie algebras,  a  unimodular completely  solvable K\"ahler Lie
algebra  is necessarily abelian.  As remarked in \cite{BC2} this follows from  Hano's result \cite{Hano}  that a   unimodular K\"ahler Lie  group   has to be 
flat  and  from the classification
of flat Lie groups obtained in \cite{Milnor}. 
K\"ahler Lie algebras have been also studied in   \cite{DM}, where     the so-called     K\"ahler double extension has been introduced.  The K\"ahler double extension realizes a K\"ahler Lie algebra as the K\"ahler reduction of another one.

In \cite{BC} it is  shown  that every symplectic
Lie group admits a sequence of  symplectic reductions to
a unique irreducible symplectic Lie group. In particular,  every symplectic
 completely solvable Lie group  is always symplectically reduced. By using this symplectic reduction we show that  if $\frak g$ is    unimodular   completely solvable Lie algebra admitting a symplectic form $\Omega$ taming a complex structure $J$, then $\frak g$  has to be abelian. 
 As a consequence we prove  the following

 {\bf Theorem}    {\it A  compact solvmanifold  $M$ of completely solvable type   endowed with an invariant  complex structure $J$ admits  a  symplectic form taming $J$ if and only if   $M$ is a complex torus. }

\medskip

{\it Acknowledgements.}  The authors are grateful to Vicente Cortes and the referee  for useful comments that have helped us to improve the final version of the paper.

\section{Preliminaries}

A Lie algebra $\frak g$ is called completely solvable   if $ad_X: \frak g \rightarrow \frak g$ has only real eigenvalues for every $X \in \frak g$. Equivalently, $\frak g$ is isomorphic to a subalgebra of the (real) upper triangular matrices in $\frak{gl} (m, \R)$ for some $m$. In particular,
nilpotent Lie groups are completely solvable.

 Lattices in  completely solvable  Lie groups satisfy strong rigidity properties,  which are similar 
to the Malcev rigidity  \cite{Malcev} of lattices in nilpotent Lie groups. These properties  were first
showed  by Saito in  \cite{Saito}.

A  unimodular K\"ahler Lie  group   has to be 
flat, as proved  by Hano in \cite{Hano}.  Therefore a  completely solvable unimodular
K\"ahler Lie  group   is abelian   by virtue of Hano's result and the classification
of flat Lie groups obtained in \cite{Milnor}.  For  non-unimodular  K\"ahler Lie algebras general results have been obtained by Gindikin, Vinberg, Pyatetskii-Shapiro in  \cite{GVP} (see also \cite{Do}).

In \cite{BC} it is  shown  that every symplectic
Lie group admits a sequence of subsequent symplectic reductions to
a unique irreducible symplectic Lie group.

Let  $(\frak g, \Omega)$  be a symplectic Lie algebra. An ideal $\frak h$  of  $\frak g$ is called an isotropic
ideal of  $(\frak g, \Omega)$  if  $\frak h$  is an isotropic subspace for  $\Omega$, i.e. $\Omega \vert_{\frak h \times \frak h} =0$.  Note that $\frak h$ is isotropic if and only if $\frak h \subseteq \frak h^{\perp_{\Omega}}$, where  $\frak h^{\perp_{\Omega}}$   is the orthogonal complement of $\frak h$ in $\frak g$ with respect to $\Omega$.

By Lemma  2.1 in \cite{BC} one has the following properties

\begin{enumerate}
\item  If  $\frak h$ is  an isotropic  ideal of  a  symplectic Lie algebra $(\frak g, \Omega)$ then $\frak h$ is abelian.

\item If  $\frak h$ is  an  ideal of  a  symplectic Lie algebra $(\frak g, \Omega)$, then $\frak h^{\perp_{\Omega}}$ is a Lie subalgebra of $\frak g$.

\item  If  $\frak h$ is  an  ideal of  a  symplectic Lie algebra $(\frak g, \Omega)$, then  $\frak h^{\perp_{\Omega}}$  is an ideal in $\frak g$  if and only if  $[\frak h^{\perp_{\Omega}}, \frak h ] = 0$.

\end{enumerate}

We will now review briefly the symplectic reduction. If   $(\frak g, \Omega)$ is a symplectic Lie algebra and $\frak h \subseteq \frak g$  is  an isotropic ideal, then   $\frak h^{\perp_{\Omega}}$ with respect to $\Omega$  is a  Lie subalgebra of $\frak g$ containing $\frak h$
 and therefore  $\Omega$  descends
to a symplectic form $\tilde \Omega$ on the quotient Lie algebra $\frak h^{\perp_{\Omega}}/\frak h$.

The symplectic Lie algebra $(\frak h^{\perp_{\Omega}}/\frak h, \tilde \Omega)$  is called the symplectic
reduction of  $(\frak g, \Omega)$ with respect to the isotropic ideal $\frak h$.

If  $\frak g$ is completely solvable then by \cite[Example 2.4] {BC}, 
  $\frak g$ contains a nontrivial  ideal  $\frak h$ which is isotropic.

In the case of symplectic  forms taming complex structures  we now  show  the following

\begin{lemma} Let $\frak g$ be  a completely solvable Lie algebra endowed with a symplectic form $\Omega$ taming a complex structure $J$.  Then one has the  following decomposition  (as vector spaces):
$$
\frak g = J \frak h  \oplus \frak h^{\perp_{\Omega}},
$$
with $\frak h$ an  abelian   isotropic ideal of $\frak g$ and $ \frak h^{\perp_{\Omega}}$ a Lie subalgebra of $\frak g$.
\end{lemma}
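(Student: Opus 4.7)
The plan is to construct the ideal $\mathfrak{h}$ via the existence result for isotropic ideals in completely solvable symplectic Lie algebras, and then use the taming condition to verify that $J\mathfrak{h}$ is complementary to $\mathfrak{h}^{\perp_{\Omega}}$.

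First I would invoke \cite[Example 2.4]{BC} to produce a nontrivial isotropic ideal $\mathfrak{h}$ of $(\mathfrak{g},\Omega)$. By property (1) of Lemma~2.1 in \cite{BC} recalled above, $\mathfrak{h}$ is automatically abelian, and by property (2), $\mathfrak{h}^{\perp_{\Omega}}$ is a Lie subalgebra of $\mathfrak{g}$. These observations give all the structural properties claimed in the lemma, so only the vector-space splitting remains.

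Next I would verify that $J\mathfrak{h}\cap\mathfrak{h}^{\perp_{\Omega}}=0$. If $X$ lies in the intersection, write $X=JY$ with $Y\in\mathfrak{h}$. Since $Y\in\mathfrak{h}$ and $X=JY\in\mathfrak{h}^{\perp_{\Omega}}$, we have $\Omega(JY,Y)=0$, i.e.\ $\Omega(Y,JY)=0$. The taming hypothesis $\Omega(Z,JZ)>0$ for every nonzero $Z$ then forces $Y=0$, hence $X=0$.

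Finally I would complete the proof by a dimension count: non-degeneracy of $\Omega$ gives $\dim\mathfrak{h}^{\perp_{\Omega}}=\dim\mathfrak{g}-\dim\mathfrak{h}$, while $J$ is an isomorphism so $\dim J\mathfrak{h}=\dim\mathfrak{h}$. Combined with the trivial intersection this yields $\mathfrak{g}=J\mathfrak{h}\oplus\mathfrak{h}^{\perp_{\Omega}}$. I do not expect a serious obstacle here: the content of the lemma is essentially the observation that, under the taming condition, $J\mathfrak{h}$ cannot meet $\mathfrak{h}^{\perp_{\Omega}}$ nontrivially because any such overlap would produce a pair $(Y,JY)$ on which $\Omega$ vanishes, contradicting positivity. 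The only subtle point to stress is that $\mathfrak{h}$ is not claimed to be $J$-invariant, so $J\mathfrak{h}$ is merely a subspace (not an ideal or subalgebra), which is exactly why the decomposition is only as vector spaces.
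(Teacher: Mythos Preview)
Your proposal is correct and follows essentially the same route as the paper: invoke the existence of a nontrivial isotropic ideal from \cite[Example 2.4]{BC}, use Lemma~2.1 of \cite{BC} for the abelian and subalgebra assertions, and then prove the splitting by showing $J\mathfrak{h}\cap\mathfrak{h}^{\perp_\Omega}=0$ via the taming condition together with a dimension count. The paper's written proof is slightly terser (it leaves the construction of $\mathfrak{h}$ and the structural properties to the preceding preliminaries), but the argument is the same.
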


\begin{proof} Since $\dim J \frak h = \dim \frak h = \dim \frak g - \dim \frak h^{\perp_{\Omega}}$, it is sufficient to show that $J \frak h \cap \frak h^{\perp_{\Omega}} = \{ 0 \}$. Let $Y$  be a non zero element  belonging to the intersection $J \frak h \cap \frak h^{\perp_{\Omega}}$. Then one has that $Y = JX$, with $X \in \frak h$ and $X \neq 0$. But then since $Y \in \frak h^{\perp_{\Omega}}$, one should have in particular 
$$
\Omega  (JX, X) =0,
$$
which is not possible since $\Omega$ tames $J$. 
\end{proof}

The ideal $\frak h$ is at least $1$-dimensional, so we can suppose that $\dim  \frak h = 1$.

\begin{lemma} \label{lemmaquotient} Let $\frak g$ be  a completely solvable Lie algebra endowed with a symplectic form $\Omega$ taming a complex structure $J$ and $\frak h$  be a $1$-dimensional isotropic ideal.   Then   the Lie algebra $ \frak h^{\perp_{\Omega}}/\frak h$ admits a symplectic form $ \tilde \Omega$ taming a complex structure $ \tilde J$.

If  $\frak g$ is unimodular and   $\frak h^{\perp_{\Omega}}$ is an  abelian ideal of $\frak g$, then $\frak g$ is abelian. \end{lemma}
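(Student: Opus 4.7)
The plan is to dispatch the two assertions separately. For the first, $\tilde\Omega$ is provided by the symplectic reduction already recalled in the excerpt. To produce $\tilde J$, fix a generator $X_0$ of $\frak h$ and set $U:=\frak h\oplus J\frak h$. Since $\Omega$ tames $J$ one has $\Omega(X_0,JX_0)>0$, so $\Omega|_{U}$ is non-degenerate, $W:=U^{\perp_{\Omega}}$ is a vector-space complement of $U$ in $\frak g$, and $W\subset\frak h^{\perp_{\Omega}}$ with $W\cap\frak h=0$, so the projection $W\to\frak h^{\perp_{\Omega}}/\frak h$ is an isomorphism. Define $J'Z:=p_{W}(JZ)$ for $Z\in W$, where $p_{W}\colon\frak g=U\oplus W\to W$ is the projection along $U$; the $J$-invariance of $U$ yields $(J')^{2}=-\mathrm{Id}$, and transport supplies an almost complex structure $\tilde J$ on $\frak h^{\perp_{\Omega}}/\frak h$. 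Taming drops out: for $0\neq Z\in W$, $\tilde\Omega([Z],\tilde J[Z])=\Omega(Z,JZ)-\Omega(Z,(JZ)_{U})=\Omega(Z,JZ)>0$, the last term vanishing because $Z\in U^{\perp_{\Omega}}$. For integrability, I pass to the complexification: the isomorphism $(\frak h^{\perp_{\Omega}}/\frak h)_{\C}\simeq\frak g_{\C}/U_{\C}$ identifies the $+i$-eigenspace of $\tilde J$ with $\frak g^{1,0}/\C X_0^{1,0}$, and I verify this is a complex Lie subalgebra under the transported bracket. This uses that $\frak g^{1,0}$ is a subalgebra of $\frak g_{\C}$ (by $N_J=0$), that $\frak h$ is an ideal (so $[\frak g^{1,0},X_0]\subset\C X_0$, hence the $(0,1)$-component of $[Z,X_0^{0,1}]$ lies in $\C X_0^{0,1}\subset U_{\C}$), and the direct computation $[U_{\C},U_{\C}]\subset U_{\C}$.

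For the second assertion, the preceding lemma decomposes $\frak g=\R V\oplus\frak h^{\perp_{\Omega}}$ with $V:=JX_0$; set $T:=\mathrm{ad}_{V}|_{\frak h^{\perp_{\Omega}}}$, which maps $\frak h^{\perp_{\Omega}}$ into itself (as $\frak h^{\perp_{\Omega}}$ is an ideal) and preserves $\frak h$. I will establish three properties of $T$. First, $d\Omega(V,Y_1,Y_2)=0$ for $Y_1,Y_2\in\frak h^{\perp_{\Omega}}$, together with $[Y_1,Y_2]=0$ (abelianness of $\frak h^{\perp_{\Omega}}$), gives $\Omega(TY_1,Y_2)+\Omega(Y_1,TY_2)=0$, so $T$ descends to a $\tilde\Omega$-skew operator $\tilde T$ on $\frak h^{\perp_{\Omega}}/\frak h$. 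Second, $N_J(X_0,Y)=0$ combined with $[X_0,Y]=0$ yields $[V,JY]=JTY+\mu(Y)V$, where $\mu(Y)X_0:=[X_0,JY]\in\frak h$; writing $JY=\alpha V+Y'$ with $Y'=p(JY)\in\frak h^{\perp_{\Omega}}$ ($p$ the projection along $\R V$), the left side also equals $TY'$, so $TY'=JTY+\mu(Y)V$, and projecting back onto $\frak h^{\perp_{\Omega}}$ gives $TY'=p(JTY)$, i.e.\ $\tilde T\tilde J=\tilde J\tilde T$. Third, complete solvability forces the eigenvalues of $T$ (a restriction of $\mathrm{ad}_{V}$) to be real, hence also those of $\tilde T$.

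Combining: since $\tilde T$ commutes with $\tilde J$, it is $\C$-linear on $(\frak h^{\perp_{\Omega}}/\frak h,\tilde J)$, and on the $(1,0)$-part it is skew-adjoint with respect to the positive-definite Hermitian form associated to $\tilde\Omega^{1,1}$ (positive by taming); its complex eigenvalues there are therefore purely imaginary. Being simultaneously real, they vanish, so $\tilde T=0$. Thus $T(\frak h^{\perp_{\Omega}})\subset\frak h$ and we may write $TY=\phi(Y)X_0$ for a linear $\phi\colon\frak h^{\perp_{\Omega}}\to\R$. Re-inserting into $TY'=JTY+\mu(Y)V$ gives $\phi(Y')X_0=(\phi(Y)+\mu(Y))V$, and the linear independence of $X_0$ and $V$ forces $\phi(Y')=0$ for every $Y$. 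The map $Y\mapsto Y'$ has kernel $\frak h$ and image of codimension one in $\frak h^{\perp_{\Omega}}$; a short check shows $X_0$ is not in this image, since $X_0=p(JY)$ would give $Y=\alpha X_0-V$, violating $\Omega(V,X_0)\neq 0$. Unimodularity gives $\tr(T)=0$, and since $\tilde T=0$ this reduces to $\phi(X_0)=0$. Hence $\phi$ vanishes on $\mathrm{image}(Y\mapsto Y')\oplus\R X_0=\frak h^{\perp_{\Omega}}$, so $\phi\equiv 0$, $T=0$, $V$ is central, and $\frak g=\R V\oplus\frak h^{\perp_{\Omega}}$ is abelian.

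The main obstacle I anticipate is the spectral pincer step: making precise that on the $(1,0)$-part $\tilde T$ is skew-adjoint with respect to the positive-definite Hermitian form coming from $\tilde\Omega^{1,1}$, so that $\C$-linearity of $\tilde T$ and $\tilde\Omega$-skew-symmetry combine to produce purely imaginary complex eigenvalues. The integrability check for $\tilde J$ in the first part, while conceptually clean via $\frak g^{1,0}$, also requires patient tracking of projections onto $U_{\C}$.
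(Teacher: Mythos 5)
Your proof is correct, and it splits into one part that matches the paper and one that is genuinely different. For the first assertion your construction of $\tilde J$ via the projection along $U=\frak h\oplus J\frak h$ is the same map the paper builds by replacing $Y+\frak h$ with $Y-\frac{\Omega(JY,X)}{\Omega(JX,X)}X+\frak h$ (the coefficient $\beta$ you extract from $W=U^{\perp_\Omega}$ is exactly that quotient), and the taming computation is identical; the one place you go beyond the paper is integrability, which the paper simply asserts. Your sketch via $(1,0)$-spaces does close: since $\frak q^{1,0}$ is the image of $\frak g^{1,0}\cap(\frak h^{\perp_\Omega})_{\C}$ under the quotient map and both $\frak g^{1,0}$ and $(\frak h^{\perp_\Omega})_{\C}$ are subalgebras, the bracket-closure is immediate, so the "patient tracking of projections" you worry about is not actually needed. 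For the second assertion the paper takes a different route: it observes that $\frak g=J\frak h\ltimes\frak h^{\perp_\Omega}$ is almost abelian and not of type (I) and then cites Proposition 7.1 of \cite{FKV}. You instead give a self-contained argument: $d\Omega=0$ plus abelianness makes $\mathrm{ad}_{JX_0}$ $\Omega$-skew on the ideal, integrability makes it commute with $\tilde J$ on the quotient, so it is skew for the positive metric supplied by taming and has purely imaginary eigenvalues, while complete solvability forces real ones; unimodularity then kills the residual $\frak h$-valued piece. All the individual steps check out (including $\mathrm{tr}(T)=\phi(X_0)$ and the decomposition $\frak h^{\perp_\Omega}=\mathrm{image}(Y\mapsto Y')\oplus\R X_0$). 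What your version buys is independence from the external reference, at the cost of about a page of computation; it also has the merit of exhibiting, already in this lemma, the same spectral pincer (skew-adjoint versus real spectrum) that powers the induction in the paper's main theorem, whereas the paper's citation keeps the lemma short but leaves the mechanism hidden in \cite{FKV}.
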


\begin{proof} Suppose that $\frak h = span < X>$. So $J \frak h = span < JX>$ and we know that, since $\frak h$ is an ideal then $[X, \frak g] \in span <X>$.
We have that 
 $$
 J \left ( Y  - \frac{\Omega (JY, X)}{\Omega (JX, X)} X  \right )  \in  \frak h^{\perp_{\Omega}},  
 $$
for every   $Y   \in \frak h^{\perp_{\Omega}}$. Moreover, since $\frak h$ is an ideal,  the complex structure $J$ induces a complex structure $\tilde J$ on $\frak h^{\perp_{\Omega}}/ {\frak h}$, defined by
$$
\tilde J (Y + \frak h) =  JY   + \frak h.
$$
Indeed, if $JY$ does not belong to $\frak h^{\perp_{\Omega}}$, one changes $Y + \frak h$ to  $$Y  - \frac{\Omega (JY, X)}{\Omega (JX, X)} X + \frak h.$$
Moreover,  $\frak h^{\perp_{\Omega}}/{\frak h}$ has  a symplectic structure $\tilde \Omega$ induced by $\Omega$.  Moreover,  the complex structure on  $\frak h^{\perp_{\Omega}}/{\frak h}$ is tamed by $\tilde \Omega$.

Suppose   that $\frak h^{\perp_{\Omega}}$ is an abelian  ideal of   $\frak g$.  Then  $\frak g =  J  \frak h \ltimes \frak h^{\perp_{\Omega}}$ and it is almost abelian.  Since $\frak g$ is not of type (I), by Proposition 7.1 in \cite{FKV} we have that $\frak g$ has to be abelian.
\end{proof}

\section{Main result}

By \cite{EF} a $4$-dimensional  completely solvable Lie algebra endowed with a symplectic form $\Omega$ taming a complex structure $J$ is necessarily K\"ahler and if it is unimodular, then it is abelian. We now show  that in every dimension a  unimodular   completely solvable Lie algebra endowed with a symplectic form $\Omega$ taming a complex structure $J$ is  abelian.

\begin{theorem} Let $\frak g$ be  a  $2n$-dimensional    unimodular completely solvable Lie algebra endowed with a symplectic form $\Omega$ taming a complex structure $J$, then $\frak g$  is abelian.

\end{theorem}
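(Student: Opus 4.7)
The plan is induction on $n$, where $\dim \frak g = 2n$. For $n=1$ the statement is automatic, and for $n=2$ it is the $4$-dimensional result of \cite{EF} quoted just before the theorem. So I assume $n\ge 3$ and that the statement holds in all smaller even dimensions. Since $\frak g$ is completely solvable, it contains a $1$-dimensional (automatically isotropic) ideal $\frak h=\R\langle X\rangle$, and the first lemma of Section~2 gives the decomposition $\frak g=J\frak h\oplus \frak h^{\perp_\Omega}$. Lemma~\ref{lemmaquotient} then produces the symplectic reduction $\widetilde{\frak g}:=\frak h^{\perp_\Omega}/\frak h$, of dimension $2n-2$, carrying a symplectic form $\widetilde\Omega$ that tames a complex structure $\widetilde J$; as a quotient of the completely solvable subalgebra $\frak h^{\perp_\Omega}$, the Lie algebra $\widetilde{\frak g}$ is itself completely solvable.

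To apply the inductive hypothesis one first has to verify that $\widetilde{\frak g}$ is unimodular. Let $\alpha\colon \frak g\to\R$ be the linear form defined by $[Y,X]=\alpha(Y)X$. Evaluating the Chevalley--Eilenberg condition $d\Omega=0$ on the triple $(X,W,JX)$ with $W\in\frak h^{\perp_\Omega}$ forces
$$[W,JX]\equiv -\alpha(W)\,JX\pmod{\frak h^{\perp_\Omega}}.$$
Choosing a complement $V$ of $\frak h$ in $\frak h^{\perp_\Omega}$ and computing $\operatorname{tr}(\operatorname{ad}_W|_{\frak g})$ in the decomposition $\frak g=\frak h\oplus V\oplus J\frak h$, the diagonal contributions on $\frak h$ and on $J\frak h$ are $+\alpha(W)$ and $-\alpha(W)$ and hence cancel; what remains is exactly $\operatorname{tr}(\operatorname{ad}_{\overline W}|_{\widetilde{\frak g}})$. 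Unimodularity of $\frak g$ therefore forces unimodularity of $\widetilde{\frak g}$, and by the inductive hypothesis $\widetilde{\frak g}$ is abelian, equivalently $[\frak h^{\perp_\Omega},\frak h^{\perp_\Omega}]\subseteq \frak h$.

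To finish I would upgrade this to the statement that $\frak h^{\perp_\Omega}$ is an abelian ideal of $\frak g$; the second part of Lemma~\ref{lemmaquotient} will then immediately yield $\frak g$ abelian. This reduces to proving two things: (a) $\alpha|_{\frak h^{\perp_\Omega}}=0$, which by property~(3) listed before the decomposition lemma is equivalent to $\frak h^{\perp_\Omega}$ being an ideal of $\frak g$; and (b) that the residual brackets $[\frak h^{\perp_\Omega},\frak h^{\perp_\Omega}]\subseteq \frak h$ in fact vanish. I plan to extract (a) and (b) by combining the Nijenhuis integrability $N_J=0$ applied to pairs drawn from $\{X,JX\}\cup \frak h^{\perp_\Omega}$, further $d\Omega=0$ relations involving $JX$, and the trace identity $\operatorname{tr}(\operatorname{ad}_{JX}|_{\frak g})=0$, all played against the strict positivity $\Omega(JX,X)>0$ provided by the taming condition.

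The main obstacle lies precisely in this last step. Induction alone places $[\frak h^{\perp_\Omega},\frak h^{\perp_\Omega}]$ inside $\frak h$ and gives no direct control on $\alpha|_{\frak h^{\perp_\Omega}}$; the residual ``Heisenberg-like'' scenario to be ruled out is the one in which $\frak h^{\perp_\Omega}$ would be a non-trivial two-step nilpotent (almost-)central extension of the abelian reduction $\widetilde{\frak g}$ by $\frak h$. Forcing this configuration to collapse requires the simultaneous interplay of symplectic (taming), complex (integrability) and unimodular data, and is the technical heart of the argument.
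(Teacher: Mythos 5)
Your setup coincides with the paper's: induction on $n$ anchored at the $4$-dimensional result of \cite{EF}, a $1$-dimensional isotropic ideal $\frak h=\mathrm{span}\langle X\rangle$, the splitting $\frak g=J\frak h\oplus\frak h^{\perp_{\Omega}}$, the tamed symplectic reduction $\frak h^{\perp_{\Omega}}/\frak h$, and the verification (your trace computation with $\alpha$ is correct and matches the paper's, which uses the explicit brackets \eqref{bracketX}) that the reduction is unimodular, hence abelian by induction, so that $[\frak h^{\perp_{\Omega}},\frak h^{\perp_{\Omega}}]\subseteq\frak h$. Up to this point the proposal is sound.

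The gap is everything after that, and you have flagged it yourself: the ``Heisenberg-like'' configuration in which $\frak h^{\perp_{\Omega}}$ is a nontrivial two-step nilpotent extension of the abelian reduction by $\frak h$ is \emph{not} ruled out by the general interplay of taming, integrability and unimodularity that you propose to invoke, and the paper does not close the argument by proving your intermediate claim that $\frak h^{\perp_{\Omega}}$ is an abelian ideal. What the paper actually does occupies more than half of its proof: Jacobi-identity computations first show that $\frak h^{\perp_{\Omega}}$ is an ideal, that $[X,\frak v]=0$, and that $ad_{JX}$ commutes with $J$ on $\frak v$, so that $\frak h^{\perp_{\Omega}}$ is the nilradical. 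Then there is a genuine case split on $[JX,X]=hX$. If $h=0$, the conclusion comes from Theorem~1.2 of \cite{FKV} applied to the nilpotent complement $\mathrm{span}\langle JX\rangle$ commuting with $J$ --- an external input your plan does not mention. If $h\neq 0$, unimodularity of $ad_{JX}$ on the nilradical forces an eigenvalue $k$ with $\mathrm{sign}(k)\neq\mathrm{sign}(h)$; the corresponding generalized eigenspace $V\subseteq\frak v$ is $J$-invariant and abelian, and a direct analysis of the Chevalley--Eilenberg differential shows that \emph{every} closed $2$-form $\omega$ on $\frak g$ satisfies $\omega(Y_{2s},JY_{2s})=0$ for a suitable basis vector of $V$, contradicting taming. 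None of this eigenvalue-sign and cohomological-obstruction machinery appears in your proposal, and without it the induction does not close; so as written the proof is incomplete at its technical heart.
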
 \begin{proof} For $n = 2$ we know   by \cite{EF}  that  the theorem holds.  We will prove the theorem by induction. 

  Let $\frak h =  {\mbox {span}}  <X>$  be a $1$-dimensional isotropic ideal $\frak h$ of $\frak g$.
Since  $ \dim \frak g = 2n$,  by  the previous Lemma we can choose a basis 
  $
   \{ X, JX, Y_1, JY_1,  \ldots, Y_{n -1},  J Y_{n -1} \}
  $
  of $\frak g$ with  $Y_l,  JY_l  \in \frak h^{\perp_{\Omega}}$, $l = 1, \ldots, n - 1$.   
  By  Lemma  \ref{lemmaquotient}  the $(2n-2)$-dimensional  Lie algebra $\frak h^{\perp_{\Omega}}/\frak h$, which can be identified with  $\frak v = span <Y_1, JY_1,  \ldots, Y_{n -1},  J Y_{n -1}>$, has a tamed complex structure. 
  
For  every $Y \in \frak v$ we have $\Omega (Y, X) =0 = \Omega (JY, X)$ and 
 $$
 \begin{array}{l}
 [X, Y] = a_1 X, \quad  [X, JY] = a_2 X,  \quad [JX, Y] = b_1 X + b_2 JX + Z_1,\\[3pt]
 [JX, JY] = c_1 X + c_2 JX + Z_2,
 \end{array}
 $$
 with $\Omega (Z_i, X) = \Omega (J Z_i, X) = 0$,  $i = 1,2$. 
 
 By the integrability of $J$ 
 $$
 [JX, JY] = [X,Y] + J [JX, Y] + J [X, JY]
 $$
 we obtain 
 $$
 (c_1 + b_2 - a_1) X + (c_2 - b_1 - a_2) JX + Z_2 - J Z_1 =0
 $$
  and therefore 
  $$
  (c_2 - b_1 - a_2) \Omega (JX, X) =0.
  $$
  Since $\Omega(JX, X) \neq 0$, we get $$c_2 = b_1 + a_2,   \quad Z_2 = JZ_1 + (a_1 - b_2 - c_1) X.$$
  
  As a consequence
  $$
  [JX, JY] = (a_1 - b_2) X + (b_1 + a_2) JX + JZ_1.
  $$
  Since $d \Omega =0$, we have
  $$
  \Omega ([JX, Y], X) = - \Omega ([Y, X], JX) - \Omega ([X, JX], Y)
  $$
  and thus
  $$
  b_2 \Omega(JX, X) = - a_1 \Omega (JX, X)
  $$
  which implies that $b_2 =  - a_1$.
  By the condition
  $$
  \Omega([JX, JY], X) = - \Omega([JY, X], JX) - \Omega([X, JX], JY)
  $$
  we get
  $$
  -  (b_1 + a_2) \Omega (X, JX) = a_2 \Omega(X, JX)
  $$
  and therefore $b_1 = - 2 a_2$.
  
  Then, for every $Y \in \frak v$,   we have
\begin{equation}\label{bracketX}
  \begin{array}{l}
  [X, Y] = a X, \quad [X, JY] = b X, \quad [JX, Y] = - 2 b X -  a JX + Z_1,\\[3pt]
  [JX, JY] = 2 a X - b JX + JZ_1,
  \end{array}
\end{equation}
  with
  $$
  a = \frac{\Omega ([X, Y], JX)}{\Omega(X, JX)}, \quad b = \frac{\Omega ([X, JY], JX)}{\Omega(X, JX)}.
  $$
Moreover by using that $\Omega(Z_1, X) =0 = \Omega (JZ_1, X)$ we obtain that 
$$
Z_1  \in span<Y_1, J Y_1, \ldots, Y_{n - 1}, J Y_{n -1}> = \frak v.
$$
 By  \eqref{bracketX}  and using  the basis $\{ X, JX, Y_1, J Y_1, \ldots, Y_{n -1}, J Y_{n -1} \}$ of $\frak g$   we have, that for every $Y \in \frak v$ 
$$
trace (ad_Y) =  (- a) + (a) + trace (ad_Y \vert_{\frak v}) =0,
$$
since $\frak g$ is unimodular, where  $ad_Y \vert_{\frak v}$ denotes  the $(2n-2) \times (2n-2)$ block  matrix of  $ad_Y \vert_{\frak  v}$with respect to the basis $Y_1, J Y_1, \ldots, Y_{n -1}, J Y_{n -1}$.
This implies that $trace (ad_Y \vert_{\frak v }) =0$, for every $Y \in  \frak v$. Note that by using the identification  $\frak v \cong \frak h^{\perp_{\Omega}}/\frak h$ as vector spaces we have that $$[Y, Z] + {\frak h}  = [Y, Z]_{\frak v} + \frak h, \quad \forall Y, Z \in \frak v$$
where by  $ [Y, Z]_{\frak v}$ we denote the component of $[Y, Z]$ on $\frak v$.
This implies that $\frak h^{\perp_{\Omega}}/\frak h$  has to be unimodular.

 Therefore by induction we can suppose that  the symplectic reduction $\frak h^{\perp_{\Omega}}/\frak h$ has  a K\"ahler structure. Since  $\frak h^{\perp_{\Omega}}/\frak h$ is completely solvable and unimodular, it has to be abelian. So as a vector space we have
  $$
  \frak g = span <X, JX> \oplus \,  \frak v,
  $$
  with  
  $span  <X> \oplus  \,  \frak  v$ a Lie subalgebra of $\frak g$ and
 $$
 span<Y_1, JY_1,  \ldots, Y_{n -1},  J Y_{n -1}> =  { \frak v}
 $$
    $J$-invariant    and such that $[\frak v, \frak v] \subseteq span <X>.$ Suppose that ${\frak h}^{\perp_{\Omega}}$ is not an ideal of $\frak g$, then  there exists a non-zero  $Y \in \frak v$,  such that $[JX, Y] \notin {\frak h}^{\perp_{\Omega}}$. Thus
 \begin{equation}\label{bracketswithperp}
[X, Y] = a X, \,  [X, JY] = b X, \, [JX, Y] = - 2 b X - a JX + Z_1, \,  [JX, JY] = 2 a X - b JX + J Z_1
\end{equation}
 with $a \neq 0$ and $Z_1 \in \frak v$.  By the Jacobi identity we obtain
 $$
 [[JX, Y], JY] + [[Y, JY], JX] + [[JY, JX],Y] =0
 $$
 and therefore in particular that the component in $\frak v$ has to vanish
 $$
 - a J Z_1 + b Z_1 =0
 $$
and so   $Z_1 =0$. If we put $[X, JX] = h X$, by the Jacobi identity
 $$
 [[X, JX], Y] + [[Y, X], JX] + [[JX, Y], X] = a h X=0
 $$
 we get $h=0$ and so $[X, JX] =0$.
 By using again  the Jacobi identity
 $$
 [[JX, Y], JY] + [[JY, JX], Y] + [[Y, JY], JX] = (- 4 b^2 - 4 a^2) X =0
 $$
 we get a contradiction. So ${\frak h}^{\perp_{\Omega}}$ has to be an ideal of $\frak g$. Moreover, with the same argument as before  we can show that $[X, Y] =0$, for every $Y \in {\frak v}$. Indeed, if there exists a non-zero $Y$ such that $[X, Y] \neq 0$, then $[JX, Y]$ has a non zero component along $JX$.
 Therefore,  for every $Y \in \frak v$, we have  $[JX, Y] \in \frak v$.
 
Thus, we have   the relations \eqref{bracketswithperp} with $a = b =0$, i.e. 
$$
[X, Y] =0, \, [X, JY] =0, \, [JX, Y] = Z_1,  \, [JX, JY] = J Z_1,
$$
for every $Y \in \frak v$ and so in particular 
  $[JX, JY] = J [JX, Y]$. Therefore \begin{equation}\label{commuting} ad_{JX} \circ J (Y) = J \circ ad_{JX} (Y),  \quad \forall Y \in {\frak v}. \end{equation}

Since  $\frak h^{\perp_{\Omega}} = span <X> \oplus  \, \frak v$ is nilpotent,  $\frak h^{\perp_{\Omega}}$ coincides with  the nilradical  $\frak n$ of $\frak g$. Indeed $[\frak h^{\perp_{\Omega}}, \frak h^{\perp_{\Omega}}] \subseteq \frak h$ and $[\frak h, \frak h^{\perp_{\Omega}}] = \{ 0 \}$.

Note that  if $[X, JX] =0$,  then we have that $span<JX>$ is a nilpotent complement of $\frak n$ such that $J ad_{JX} = ad_{JX} J$ and we can apply  Theorem 1.2 in \cite{FKV} to conclude that $\frak g$ has to be abelian.

Now in order to complete the proof we claim  that if $h \neq 0$  then we get a contradiction. Suppose that $[JX, X]  = h X$, with $h \neq 0$. If $\frak h^{\perp_{\Omega}}$ is abelian, then  $\g=\R JX\ltimes \R^{2n-1}$ and by \cite{FKV} an almost abelian completely solvable Lie algebra  admitting a symplectic form taming a complex structure has to be abelian.

Since $ad_{JX}$ is unimodular on $\frak h^{\perp_{\Omega}}$,
$ad_{JX}$ has  an eigenvalue $k$ such that $sign(h)\not=sign(k)$.
Take  the generalized eigenspace (i.e. the eigenspace of the semi-simple part $(ad_{JX})_{s}$  of $ad_{JX}$)  $V\subset \frak h^{\perp_{\Omega}}$ for the eigenvalue $k$.
Since  $ad_{JX}( {\frak v})\subset  {\frak v}$, $ad_{JX}(X)=hX$ and $h\not=k$, we have  $V\subset  {\frak v}$.
Then   we get  $[JX,V]\subset V$  and  by \eqref{commuting}     $ad_{JX} J (Y)  = J ad_{JX} (Y)$, for every $Y \in V$. 
For $Y_{1},Y_{2}\in V$, we have $[Y_{1},Y_{2}]=cX$ for some $c$.
Since the semi-simple part $(ad_{JX})_{s}$ of $ad_{JX}$ is a derivation on $\frak h^{\perp_{\Omega}}$, 
we get 
\[2kcX=hcX.
\]
By $sign(h)\not=sign(k)$, we obtain  $c=0$ and hence $[V,V]=0$.
Take a subspace $W\subset \frak h^{\perp_{\Omega}}$ such that
 \[\frak h^{\perp_{\Omega}}=V\oplus W \oplus span< X>\] and  
\[[JX, W \oplus span< X>] \subset  W \oplus span< X>\]
by using the generalized eigenspace decomposition.
We have 
$[V,W]\subset span< X>$ and $[W,W]\subset span< X>$.
Consider 
\[\bigwedge \g^{\ast}=\bigwedge  ( span< Jx>\oplus V^{\ast}\oplus W^{\ast} \oplus span< x>).
\]
Then we have $dV^{\ast}\subset Jx\wedge V^{\ast}$, $dW^{\ast} \subset  Jx\wedge W^{\ast}$ and 
\[dx\in V^{\ast}\wedge W^{\ast} \oplus Jx\wedge W^{\ast}\oplus W^{\ast}\wedge W^{\ast}\oplus span<Jx\wedge x>
\]
by $[JX,V]\subset V$ and  $[V,V]=0$.
We obtain
\[d(Jx\wedge V^{\ast})=0,
\]
\[d(Jx\wedge W^{\ast})=0,
\]
\[d(Jx\wedge x)\subset jx\wedge V^{\ast}\wedge W^{\ast}\oplus Jx\wedge W^{\ast}\wedge W^{\ast},
\]
\[d(V^{\ast}\wedge V^{\ast})\subset Jx\wedge V^{\ast}\wedge V^{\ast},
\]
\[d(V^{\ast}\wedge W^{\ast})\subset Jx\wedge V^{\ast}\wedge W^{\ast},
\]
\[d(V^{\ast}\wedge x)\subset Jx\wedge V^{\ast}\wedge x\oplus V^{\ast}\wedge V^{\ast}\wedge W^{\ast}\oplus Jx\wedge V^{\ast}\wedge W^{\ast}\oplus V^{\ast}\wedge V^{\ast}\wedge W^{\ast},
\]
\[d(W^{\ast}\wedge W^{\ast})\subset Jx\wedge W^{\ast}\wedge W^{\ast}
\]
and 
\[d(W^{\ast}\wedge x)\subset Jx\wedge W^{\ast}\wedge x\oplus V^{\ast}\wedge W^{\ast}\wedge W^{\ast}\oplus W^{\ast}\wedge W^{\ast}\wedge W^{\ast}\oplus x\wedge W^{\ast}\wedge W^{\ast} \oplus  Jx\wedge W^*\wedge W^*.\]
Consider a generic   $2$-form  $\omega \in \bigwedge^{2} \g^{\ast}$. We can write $\omega$ as  $$\omega =\omega_{1}+\omega_{2}$$  with  $\omega_{1}\in V^{\ast}\wedge V^{\ast}$ and $\omega_{2}$ belonging  to the  complement of $V^{\ast}\wedge V^{\ast}$.

If we impose $d\omega=0$,  then $d\omega_{1}=0$.
Since $ad_{JX}(V)\subset V$ and $ad_{JX}\circ J=J\circ ad_{JX}$, considering the triangulation of $ad_{JX}$ on $V$,
we can take a basis  $(Y_{1}, Y_{2},\dots, Y_{2s-1},Y_{2s})$  of  $V$  such that 
\[
 [JX, Y_{i}]=kY_{i} \,\,\, mod\,\,  span<  Y_{1}, \dots, Y_{i-1}>.
\]
and 
$JY_{2i}=Y_{2i-1}$.
Consider the dual basis  $(y_{1}, y_{2},\dots, y_{2s-1},y_{2s})$ of  $(Y_{1}, Y_{2},\dots, Y_{2s-1},Y_{2s})$.
Then 
\[d(y_{2i}\wedge y_{2i-1})=-2k Jx\wedge  y_{2i} \wedge y_{2i-1} \,\, \, mod \,\, Jx\wedge  span_{j_{1}+j_{2}<4i-1}< y_{j_{1}}\wedge y_{j_{2}}>.
\]
If $\omega_{1}$ has a non-trivial $y_{2s}\wedge Jy_{2s}$-component, then we have $d\omega_{1}\not=0$.
Hence we get  $$\omega(Y_{2s},JY_{2s})=\omega_{1}(Y_{2s},Y_{2s-1})=0$$  for  every closed $2$-form $\omega \in \bigwedge^{2} \g^{\ast}$.
This implies that  any  closed $2$-form $\omega \in \bigwedge^{2} \g^{\ast}$ cannot be a symplectic form taming $J$.
Thus $h\not=0$ is impossible.

\end{proof}

As a consequence of previous theorem  we can prove  the following
\begin{theorem}
A  compact solvmanifold  $M= \Gamma \backslash G$ of completely solvable type   endowed with an invariant  complex structure $J$ admits  a  symplectic form taming $J$ if and only if   $M$ is a complex torus. 
\end{theorem}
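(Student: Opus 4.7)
The ``if'' direction is immediate, since a complex torus is K\"ahler and hence admits a symplectic form taming its natural complex structure. For the converse, my plan is to replace the given (not necessarily invariant) symplectic form $\Omega$ by an invariant one that still tames $J$, thereby reducing the question to the Lie-algebra theorem just proved.

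Since $M=\Gamma\backslash G$ is compact, $G$ is unimodular, so it carries a bi-invariant Haar measure that descends to a finite volume form $dV$ on $M$. Applying the standard symmetrization for completely solvable solvmanifolds, I form
\[
\tilde\Omega \;=\; \frac{1}{\mathrm{vol}(M)}\int_M \Omega\, dV \;\in\; \Lambda^2\g^*,
\]
where the integration is done componentwise in a left-invariant coframe. This operation commutes with the exterior derivative, so $d\Omega=0$ forces $d\tilde\Omega=0$ at the Lie-algebra level. I then claim $\tilde\Omega$ tames $J$: because $J$ is invariant, for every non-zero left-invariant vector field $X$ the function $p\mapsto \Omega_p(X_p,(JX)_p)$ is pointwise positive, so its average is strictly positive, giving $\tilde\Omega(X,JX)>0$ for every non-zero $X\in\g$. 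As any $2$-form taming $J$ is automatically non-degenerate, $\tilde\Omega$ is an invariant symplectic form on $\g$ taming $J$.

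The Lie algebra $\g$ is completely solvable by hypothesis and unimodular because $G$ admits a lattice. The previous theorem therefore forces $\g$ to be abelian, so $G\cong\R^{2n}$ and $M$ is diffeomorphic to a real torus carrying the translation-invariant complex structure induced by $J$; in other words $M$ is a complex torus. The only step requiring genuine care is the symmetrization, and its two needed properties --- commutation with $d$ and preservation of the taming condition --- rely respectively on unimodularity of $G$ and on invariance of $J$, both of which are standard; the rest of the argument is immediate.
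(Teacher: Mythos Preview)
Your proof is correct and follows essentially the same approach as the paper: use unimodularity of $G$ (from the existence of a lattice) and the symmetrization/averaging process to replace $\Omega$ by an invariant symplectic form still taming $J$, then invoke the Lie-algebra theorem. You have supplied more detail on the symmetrization step than the paper (which simply cites \cite{EFV}), and you correctly isolate the two facts that make it work---unimodularity for commutation with $d$, and invariance of $J$ for preservation of taming.
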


\begin{proof}  Since $G$ admits a compact quotient by  a lattice,  by Milnor's result  \cite{Milnor} the Lie group $G$ has to be unimodular.   Suppose that $M$ admits a symplectic structure $\Omega$ taming an invariant  complex structure  $J$ on $M$, by using  symmetrization process we can suppose that $\Omega$ is invariant (see \cite{EFV}). So the Lie algebra of $G$ has to be a  unimodular completely solvable Lie algebra admitting a  symplectic form taming a complex structure.  Therefore,  by previous theorem   $\frak g$ has to be abelian.
\end{proof}

\bigskip

\bigskip

\end{document}